\newtheorem{thrm}{Theorem}[section]
\newtheorem{lemm}[thrm]{Lemma}
\newtheorem{cor}[thrm]{Corollary}
\theoremstyle{definition}
\newtheorem{defn}[thrm]{Definition}
\theoremstyle{remark}
\newtheorem{rem}[thrm]{Remark}
\numberwithin{equation}{section}
\newcommand{\vect}[2]{\left[ \begin{array}{r}{#1}\\{#2}\end{array} \right]}
\newcommand{\mat}[4]{\left[ \begin{array}{rr}{#1}&{#2}\\{#3}&{#4}\end{array} \right]}
\newcommand{\abs}[1]{\lvert#1\rvert}
\begin{document}
\title{Radix and Pseudodigit Representations in $\mathbb{Z}^n$}
\author{Eva Curry}
\address{Department of Mathematics and Statistics, Acadia University, Wolfville, Nova Scotia, Canada B4P 2R6}
\email{eva.curry@acadiau.ca}
\thanks{The author would like to thank Roe Goodman for his extensive help in refining the proof of the radix representation theorem, and Roger Nussbaum for suggesting the $\| \cdot \|'$ norm.}

\subjclass[2010]{Primary 11A63; Secondary 42C40, 15B36}
\date{February 28, 2005}
\keywords{Radix representation, positional number system, multivariable wavelet, dilation matrix}

\commby{Bryna R. Kra}

\begin{abstract}We define radix representations for vectors in $\mathbb{Z}^n$ analogously with radix representations in $\mathbb{Z}$, and give a sufficient condition for a matrix $A:\mathbb{Z}^n \rightarrow \mathbb{Z}^n$ to yield a radix representation with a given canonical digit set.  We relate our results to a sufficient condition given recently by Jeong~\cite{jeo}.  We also show that any expanding matrix $A:\mathbb{Z}^n \rightarrow \mathbb{Z}^n$ will not be too far from yielding a radix representation, in that we can partition $\mathbb{Z}^n$ into a finite number of sets such that $A$ yields a radix representation on each set up to translation by $A^N s$ for some vector $s$ ($N \geq 0$ will vary).  We call the vectors $s$ \emph{pseudodigits}, and call this decomposition of $\mathbb{Z}^n$ a \emph{pseudodigit representation}.\end{abstract}

\maketitle

\section{Introduction}\label{sec_intro}

A number $x$ in some set (such as the whole numbers, integers, or real numbers) is written in a radix representation if there are a base (also called a radix) $\beta$, a finite set of digits $D$, and a finite number $N$ such that $x$ can be written
\[ x = \sum_{j=0}^{N} \beta^j d_j \]
with the $d_j \in D$.  Examples of radix representations for the whole numbers include our standard decimal representation as well as binary representation.  In both these examples, the number is usually written as a string of digits $d_N d_{N-1} \cdots d_1 d_0$, where the position of a digit in the string indicates the power of $\beta$ that the digit is multiplied by.  Thus radix representations are also known as positional number systems.  It is well known that the nonnegative integers can be written in radix representation with any positive integer $\beta \geq 2$ as a base and with digit set $[\beta] = \{0, 1, \ldots, \beta-1\}$.  Matula~\cite{mat} has shown that digits sets can be found to write the integers (both positive and negative) in radix representation if the base satisfies $\abs{\beta} \geq 3$, $\beta \in \mathbb{Z}$.

Radix representations, or positional number systems, can be generalized to higher dimensions.  For arbitrary dimension $n \geq 1$, any matrix with integer entries $A \in M_n(\mathbb{Z})$ defines a linear transformation on $\mathbb{Z}^n$.  We consider expanding matrices in the following sense.
\begin{defn} A \emph{dilation matrix} is a matrix $A$ with integer entries, each of whose eigenvalues $\lambda$ satisfies $\abs{\lambda}>1$.\end{defn}
The determinant of a dilation matrix is an integer $q = \abs{\det{A}}$ with $q \geq 2$.  A dilation matrix gives a mapping of $\mathbb{Z}^n$ into a sublattice of $\mathbb{Z}^n$ with nontrivial cokernel.\footnote{We take the definition of dilation matrix from multivariable wavelet theory.  See \cite[ch.\ 5]{woj} for more on this subject.}

Let $A$ be a dilation matrix and let $\tilde{D}$ be a set of coset representatives for $\mathbb{Z}^n/A(\mathbb{Z}^n)$.  Note that $\tilde{D}$ has $q$ elements.  Any $x \in \mathbb{Z}^n$ is in a unique coset of $\mathbb{Z}^n/A(\mathbb{Z}^n)$, and thus can be written uniquely as
\[ x = A y + r, \]
for some $y \in \mathbb{Z}^n$ and $r \in \tilde{D}$.  This defines a Euclidean algorithm for decomposing any $x_0 \in \mathbb{Z}^n$:
\begin{align*}
x_0 &= A x_1 + r_0\\
x_1 &= A x_2 + r_1\\
x_2 &= A x_3 + r_2\\
    &\vdots
\end{align*}
where the $x_j$ and $r_j$ are unique, given $x_0$.  We say that this algorithm terminates if there exists some $N \geq 1$ such that $x_j = r_j = \mathbf{0}$ for all $j > N$.
\begin{defn} An $n \times n$ matrix $A$ yields a radix representation (for $\mathbb{Z}^n$) with digit set $\tilde{D}$ if the Euclidean algorithm described above terminates for every $x \in \mathbb{Z}^n$.
\end{defn}
That is, if $A$ yields a radix representation with digit set $\tilde{D}$, then for every $x \in \mathbb{Z}^n$ there exists a nonnegative integer $N = N(x)$ such that
\[ x = A x_1 + d_0 = A^2 x_2 + A d_1+ d_0 = \cdots = \sum_{j=0}^N A^j d_j \]
for some sequence of digits $d_j \in \tilde{D}$.  Thus we can represent $x$ by the sequence 
\[ x = (d_n, \ldots, d_1, d_0),\] 
just like positional number systems in one dimension.  As in the case of positional number systems for $\mathbb{Z}$, we call the coset representatives $d_j$ digits, and to call $\tilde{D}$ a digit set.  

Not all dilation matrices yield radix representations.  For example, when $A = 2$ ($n = 1$), with digit set $\tilde{D} = \{0, 1\}$ we can represent every positive integer with a radix representation (writing them in binary), but we cannot represent any negative integers.  When $A$ does not yield a radix representation, we can still find a unique sequence of digits associated to each $x \in \mathbb{Z}^n$ using the algorithm above.  However this sequence will be infinite for vectors for which the algorithm does not terminate.  Section \ref{sec_pseudo} presents an alternative representation in the case that a dilation matrix does not yield a radix representation.  In section \ref{sec_radix}, we give sufficient conditions for a dilation matrix $A$ to yield a radix representation.

The existence of a radix representation depends on the choice of digit set as well as the matrix used as a radix.  A digit set can be any set of coset representatives, so many potential digit sets are possible.  In general, if one takes any fundamental domain $\tilde{F}$ for $\mathbb{Z}^n$ (that is, any set $\tilde{F} \subset \mathbb{R}^n$ congruent to $\mathbb{R}^n / \mathbb{Z}^n$), then $A(\tilde{F}) \cap \mathbb{Z}^n$ will be a set of coset representatives of $\mathbb{Z}^n / A(\mathbb{Z}^n)$, and thus a candidate for a digit set.  For positional notation in one dimension, the digit set is usually taken to be $\{ 0, 1, \ldots, \abs{\beta}\}$.  This is not a satisfactory digit set in the present context, since it will not allow us to represent negative numbers with a radix representation when $\beta$ is positive. We run into problems since the origin $\mathbf{0}$ is on the ``edge'' of the digit set in some sense (we will make this more precise in section \ref{sec_radix}).  In general, choosing the smallest possible coset representatives as digits, and choosing digits that can be generated by the intersection of a convex set with $\mathbb{Z}^n$, seems to produce digit sets that are easier to work with and more likely to yield a radix representation.  Thus in this paper we consider the digit set
\[ D := A(F) \bigcap \mathbb{Z}^n, \]
where $F = [-\frac{1}{2}, \frac{1}{2})^n$.

\section{Pseudodigit Representation}\label{sec_pseudo}

While not all dilation matrices give a radix representation, we give a representation that always holds.
\begin{defn} When an $n \times n$ dilation matrix $A$ does not yield a radix representation with digit set $D = A(F) \cap \mathbb{Z}^n$, but does satisfy Theorem \ref{pseudodigit_rep} below, we say that $A$ yields a pseudodigit representation.
\end{defn}

\begin{thrm}\label{pseudodigit_rep}
Let $A$ be a dilation matrix of dimension $n \geq 1$.  Let $D = A(F) \cap \mathbb{Z}^n$, where $F = [-\frac{1}{2}, \frac{1}{2})^n$.  There exists a finite set $S$ such that every vector $x \in \mathbb{Z}^n$ can be written in exactly one of the following forms:
\begin{enumerate}
\item $x = \sum_{j=0}^{N(x)} A^j d_j$, where $d_j \in D$ for each $j$ and $N(x) \geq 0$;
\item $x = A^{N(x)}s + \sum_{j=0}^{N(x)-1} A^j d_j$, where $d_j \in D$, $s \in S$, and $N(x) \geq 0$.
\end{enumerate}
\end{thrm}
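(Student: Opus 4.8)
The plan is to analyze the behavior of the Euclidean algorithm $x_0 = A x_1 + r_0$, $x_1 = A x_2 + r_1, \ldots$ by tracking the sizes of the iterates $x_j$ under a suitable norm. Because $A$ is a dilation matrix, all eigenvalues exceed $1$ in modulus, so there is a norm $\|\cdot\|'$ (the one Nussbaum suggested, built from a Jordan-type decomposition) in which $A$ is genuinely expanding, i.e. $\|Ay\|' \geq c \|y\|'$ for some $c > 1$. The digits lie in $D = A(F) \cap \mathbb{Z}^n$ with $F = [-\tfrac{1}{2}, \tfrac{1}{2})^n$, so each $r_j$ is uniformly bounded, say $\|r_j\|' \leq M$. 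The idea is that the map $x \mapsto x_1 = A^{-1}(x - r_0)$ contracts: $\|x_1\|' \leq c^{-1}(\|x\|' + M)$, so iterating drives every orbit into a bounded region. I would first make this precise by showing there is a radius $R$ such that once $\|x_j\|' \leq R$, all subsequent iterates stay in the closed ball $B_R = \{ y : \|y\|' \leq R \}$, and that every $x_0$ enters $B_R$ after finitely many steps.

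Once every orbit is trapped in $B_R$, the key observation is that $B_R \cap \mathbb{Z}^n$ is a finite set, and the algorithm restricted to it is a deterministic map $\Phi(x) = A^{-1}(x - r(x))$ on this finite set (where $r(x) \in D$ is the unique digit in $x$'s coset). A deterministic map on a finite set is eventually periodic: starting from any point, the orbit eventually lands on a cycle. Orbits that reach $\mathbf{0}$ (the fixed point, since $\mathbf{0} = A\mathbf{0} + \mathbf{0}$) and stay there correspond exactly to case (1), the terminating radix representations. I would then define the pseudodigit set $S$ to be the set of all vectors lying on cycles of $\Phi$ other than the trivial fixed point $\mathbf{0}$; equivalently, $S$ collects the "residual" vectors that the algorithm can never clear. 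Since $B_R \cap \mathbb{Z}^n$ is finite, $S$ is finite.

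With $S$ in hand, the representation follows by running the algorithm until it first reaches a point of $S \cup \{\mathbf{0}\}$. If the orbit of $x$ reaches $\mathbf{0}$ after $N(x)+1$ steps, then back-substituting $x = \sum_{j=0}^{N(x)} A^j d_j$ gives case (1). If instead the orbit first reaches some $s \in S$ at step $N(x)$, then writing $x = A^{N(x)} x_{N(x)} + \sum_{j=0}^{N(x)-1} A^j d_j$ with $x_{N(x)} = s$ gives case (2). I would need to argue the two cases are mutually exclusive and that $N(x)$ and the digits are well-defined, which follows from the uniqueness of the $x_j$ and $r_j$ already noted in the excerpt, together with the requirement that $N(x)$ be the \emph{first} index at which the orbit meets $S \cup \{\mathbf{0}\}$. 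Care is needed to ensure that no point of $S$ itself admits a terminating representation (otherwise case (2) could collapse into case (1)); this is handled by defining $S$ to exclude any vector whose orbit eventually reaches $\mathbf{0}$.

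The main obstacle I expect is the first step: constructing the norm $\|\cdot\|'$ in which $A$ is strictly expanding and pinning down the trapping radius $R$ explicitly enough that the ball is forward-invariant under the contraction $x \mapsto A^{-1}(x - r)$. Diagonalizable dilation matrices are straightforward, but the general case requires handling Jordan blocks, where a naive operator norm of $A^{-1}$ may exceed $1$; the standard fix is to rescale the generalized eigenbasis so that the off-diagonal entries become arbitrarily small, pushing $\|A^{-1}\|'$ below $1$. Once that norm is available, the finiteness of $B_R \cap \mathbb{Z}^n$ and the eventual-periodicity argument are comparatively routine, and the bookkeeping for the two cases is clean.
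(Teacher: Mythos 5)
Your proposal is correct and follows essentially the same route as the paper: construct a norm adapted to $A$ under which the algorithm is a contraction up to a bounded additive error, deduce a forward-invariant ball of radius $R$ that every orbit enters, and then use finiteness of $B_R \cap \mathbb{Z}^n$ to conclude eventual periodicity, taking cycle representatives (excluding the fixed point $\mathbf{0}$) as the pseudodigits. The only cosmetic difference is that the paper builds the adapted norm explicitly as $\|x\|' = \bigl(\sum_{j=0}^{\infty} \|A^{-j}x\|_2^2\bigr)^{1/2}$ (convergence via the spectral radius formula) rather than by rescaling a Jordan basis, but both constructions serve the identical purpose.
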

That is, $\mathbb{Z}^n$ can be partitioned into a set of vectors that can be written in a radix representation, and (a finite number of) translates of that set by sets $\{ A^js:\ j \geq 0, s \in S \}$.  Call the elements of $S$ \emph{pseudodigits}.

Let us clarify the notation.  Individual vectors $x \in \mathbb{Z}^n$ may have a radix representation with matrix $A$ and digit set $D$, however we only say that $A$ yields a radix representation if \emph{every} $x \in \mathbb{Z}^n$ has a radix representation (and thus no $x$ has only a pseudodigit representation).  Those vectors which do not have a radix representation will always have a pseudodigit representation instead.  To distinguish between the case where all vectors $x$ have a radix representation, and the case where some vectors $x$ have only a pseudodigit representation instead of a radix representation, we say that $A$ yields a radix representation with digit set $D$, or $A$ yields a pseudodigit representation, respectively.  We will also say that a vector $x \in \mathbb{Z}^n$ has a pseudodigit representation only when $x$ does not have a radix representation.

We must first define a norm under which the dilation matrix $A$ is expanding.  The $l^2$ norm is not sufficient for our purposes, since $\|Ax\|_2$ may not be greater than $\|x\|_2$ for all vectors $x \in \mathbb{R}^n$.  Take, for example, the dilation matrix and vector
\[ A = \left[ \begin{array}{rr}2&-2\\-1&2\end{array} \right], \quad x = \vect{1}{1}. \]
\begin{lemm}\label{lemm_norm}  Let $\| \cdot \|'$ be the norm defined by
\[ \|x\|' = \left(\sum_{j=0}^{\infty} \|A^{-j}x\|_2^2\right)^{1/2}. \]
Then $\| Ax \|' > \| x \|'$ for all $\mathbf{0} \neq x \in \mathbb{Z}^n$.
\end{lemm}
\begin{proof}
We can see that this series converges using the root test and the fact that the spectral radius
\[ r(B) = \max{\{\abs{\lambda}:\ \mbox{$\lambda$ an eigenvalue of $B$} \}} \]
for any matrix $B$ can also be found by the formula~\cite{ste}
\[ r(B) = \lim_{j \rightarrow \infty} \|B^j\|_{op}^{1/j}, \]
where $\| B \|_{op}$ is the operator norm
\[ \|B\|_{op} = \sup{\{\|Bx\|_2:\ x \in \mathbb{R}^n, \|x\|_2=1\}}. \]
Since
\[ (\|A^{-j}x\|_2^2)^{1/j} \leq (\|A^{-j}\|_{op}^2)^{1/j} (\|x\|_2^2)^{1/j}, \]
with $\lim_{j \rightarrow \infty} (\|x\|_2^2)^{1/j} = 1$ for all $x \in \mathbb{R}^n$,
\[ \lim_{j \rightarrow \infty} (\|A^{-j}x\|_2^2)^{1/j} \leq \max{\{\abs{\lambda}^{-2}:\ \mbox{$\lambda$ an eigenvalue of $A$}\}} < 1, \]
so our series does converge, and defines a norm on $\mathbb{R}^n$.  Note that, for $x \neq \mathbf{0}$,
\begin{align*}
\|Ax\|' &= \left( \sum_{j=0}^{\infty} \|A^{-j+1}x\|_2^2 \right)^{1/2} = \left( \sum_{j=-1}^{\infty} \|A^{-j}x\|_2^2 \right)^{1/2} \\
        &= \left( \|Ax\|_2^2 + \sum_{j=0}^{\infty} \|A^{-j}x\|_2^2 \right)^{1/2} > \|x\|'.
\end{align*}
\end{proof}

\begin{rem}  Our norm is similar to the $\| \cdot \|'$ norm used by Lagarias and Wang~\cite{lw1}.  However, they use \[ \| x \|' = \sum_{j=0}^{\infty} \rho^j \|A^{-j}x\|_2 \] where $\rho$ may be any scalar satisfying $1 \leq \rho < \min{\abs{\lambda}}$, with the minimum taken over all eigenvalues $\lambda$ of $A$.  We use the $l^2$ version of this norm, with $\rho=1$, since we will often wish to compare $\| x \|'$ and $\| x \|_2$ for vectors $x \in \mathbb{R}^n$.  In particular, while the traditional definition of a dilation matrix involves a lower bound on the modulus of the eigenvalues, we will be considering lower bounds on the singular values, in which case it will be more natural to consider the $l^2$ norm.
\end{rem}

\begin{proof}[Proof of Theorem \ref{pseudodigit_rep}]
Let 
\begin{align*}
m :&= \min{\{ \|Ax\|':\ \|x\|'=1\}}, \\
M :&= \max{\{ \|Ax\|':\ \|x\|'=1\}}, \quad \mbox{and} \\
\rho :&= \max{\{\|f\|':\ f \in F\}}.
\end{align*}
Then $\|r\|' \leq M\rho$ for every digit $r \in D$.

Given $x \in \mathbb{Z}^n$, let $y \in \mathbb{Z}^n$ and $r \in D$ be the unique vectors such that $x = A y + r$.  Then by the definitions of $m$, $M$, and $\rho$,
\[
\|y\|' = \|A^{-1}(x-r)\|' \leq \frac{1}{m}\|x-r\|' \leq \frac{\|x\|' + \|r\|'}{m} \leq \frac{\|x\|' + M\rho}{m}
\]

If $x$ is big enough, $\|y\|'$ will be less than $\|x\|'$.  A sufficient condition for this is
\[ \|x\|' > \frac{\|x\|' + M\rho}{m}, \]
which simplifies to
\[ \|x\|' > \frac{M\rho}{m-1}. \]
That is, the sequence $(x_0, x_1, x_2, \ldots)$ generated by the multidimensional Euclidean algorithm described above is decreasing until some $x_N$ is in the ball of radius $R:= \frac{M\rho}{m-1}$.  Once inside this ball, the subsequent $x_j$ (for $j > N$) remain in the ball, since in each case $\|x_{j-1}\|' < \frac{M\rho}{m-1}$ and thus
\[ \|x_j\|' \leq \frac{\frac{M\rho}{m-1} + M\rho}{m} = \frac{M\rho}{m-1}. \]

For some vectors $x_0$, the sequence $(x_j)$ will continue to decrease down to the origin.  That is, we can find a (finite) radix representation for these vectors.  For vectors $x_0$ that do not have a radix representation, since the $x_j$ take values in a finite set for $j \geq N$, there will be some $k \geq 1$ and some $l \geq N$ such that $x_{j+k} = x_j$ for all $j \geq l$ (the sequence will start to repeat at some stage after it has entered the ball of radius $R$).  There are a finite number of possible cycles that the sequence $(x_j)$ can fall into, again since there are a finite number of integer vectors inside the ball of radius $R$.  Choose one vector in the cycle to represent each distinct cycle.  These cycle representatives are the pseudodigits. \end{proof}

This suggests a method for determining whether a given dilation matrix $A$ gives a radix representation for every $x \in \mathbb{Z}^n$.  When $A$ is normal, we can write $x = \sum_{\lambda} x_{\lambda}$, where the sum is taken over all eigenvalues $\lambda$ of $A$, and $x_{\lambda}$ is the orthogonal projection of $x$ onto the eigenspace corresponding to the eigenvalue $\lambda$.  We may then write
\begin{align*}
\| x \|'^2 &= \sum_{\lambda} \|x_{\lambda}\|' = \sum_{\lambda} \sum_{j=0}^{\infty} \|A^{-j}x_{\lambda}\|_2^2 = \sum_{\lambda} \sum_{j=0}^{\infty} \|\lambda^{-j} x_{\lambda}\|_2^2 = \sum_{\lambda} \sum_{j=0}^{\infty} (\abs{\lambda}^2)^{-j} \|x\|_2^2\\
           &= \sum_{\lambda} \frac{\abs{\lambda}^2}{\abs{\lambda}^2-1} \|x_{\lambda}\|_2^2,
\end{align*}
and
\[
\| Ax \|'^2 = \sum_{\lambda} \frac{\abs{\lambda}^2}{\abs{\lambda}^2-1} \| \lambda x_{\lambda} \|_2^2 = \sum_{\lambda} \frac{\abs{\lambda}^2}{\abs{\lambda}^2-1} \abs{\lambda}^2 \|x_{\lambda}\|_2^2.
\]
Then $m^2 \|x\|'^2 \leq \|Ax\|'^2 \leq M^2 \|x\|'^2$, with (since $A$ is normal),
\begin{align*}
m &= \min{\{ \abs{\lambda}:\ \mbox{$\abs{\lambda}$ an eigenvalue of $A$}\}},\\
M &= \max{\{ \abs{\lambda}:\ \mbox{$\abs{\lambda}$ an eigenvalue of $A$}\}}.
\end{align*}
We can then compute the radius $R$ and check whether $A$ gives a radix representation of each vector $x \in \mathbb{Z}^n$ with $\|x\|' \leq R$.  

If the dilation matrix $A$ is not normal, it may be difficult to find $R$.  We may use the singular values~\cite{ste} of $A$ to compute an upper bound for the radius $R$, however, provided $\sigma > 1$ for every singular value $\sigma$ of $A$.  Let
\begin{align*}
\mu &= \min{\{ \sigma:\ \mbox{$\sigma$ a singular value of $A$}\}},\\ 
\nu &= \max{\{ \sigma:\ \mbox{$\sigma$ a singular value of $A$}\}}.
\end{align*}
If $\mu > 1$ then
\[ \|Ax\|'^2 \geq \|Ax\|_2^2 \geq \mu^2 \|x\|_2^2 \]
for all $x \in \mathbb{R}^n$, so $m \geq \mu$.  Note also that $\mu^{-1}$ is the maximum of the singular values of $A^{-1}$, with $\mu^{-1} < 1$.  Then $\|A^{-j}x\|_2^2 \leq (\mu^2)^{-j} \|x\|_2^2$ for all $x \in \mathbb{R}^n$, and
\[
\|Ax\|'^2 \leq \sum_{j=0}^{\infty} (\mu^2)^{-j}\|Ax\|_2^2 = \frac{\mu^2}{\mu^2-1} \|Ax\|_2^2 \leq \frac{\mu^2}{\mu^2-1} \nu^2 \|x\|_2^2.
\]
Thus $M \leq \left(\frac{\mu^2 \nu^2}{\mu^2-1}\right)^{1/2}$.  Now we may give a bound on $R$:
\[
R \leq \frac{\left(\frac{\mu^2 \nu^2}{\mu^2-1}\right)^{1/2} \sqrt{n}}{2(\mu -1)} = \frac{\mu \nu \sqrt{n}}{2(\mu-1)^{3/2}(\mu+1)^{1/2}}.
\]

A program for finding digits and pseudodigits for dilation matrices can be obtained by contacting the author.  A few specific examples are given in sections \ref{sec_ex}.

\section{Radix Representation}\label{sec_radix}

As above, set
\begin{align*}
\mu &= \min{\{ \sigma:\ \mbox{$\sigma$ a singular value of $A$}\}},\\
\nu &= \max{\{ \sigma:\ \mbox{$\sigma$ a singular value of $A$}\}}.
\end{align*}
If $\mu$ is large enough, in particular, if $\mu > 2\sqrt{n}$, then $B_{\sqrt{n}} \cap \mathbb{Z}^n \subset D$, where $B_{\sqrt{n}}$ is the ball of radius $\sqrt{n}$ in the $\| \cdot \|_2$ norm centered at the origin.  This means that all vectors of the form
\[ \left[ \begin{array}{r} \epsilon_1 \\ \epsilon_2 \\ \vdots \\ \epsilon_n \end{array} \right], \quad \epsilon_i = -1, 0, 1\ \mbox{for $i = 1,2, \ldots, n$} \]
are contained in the digit set $D$.  As the following theorem shows, this gives a sufficient mathematical formulation of the intuitive idea that we need the origin to not be on the ``edge'' of the digit set in order to ensure that $A$ yields a radix representation.

\begin{thrm}\label{rad_rep}
Let $A$ be an $n \times n$ dilation matrix.  If
\[ \mu > 2\sqrt{n} \]
then $A$ yields a radix representation of $\mathbb{Z}^n$ with the canonical digit set $D = AF \cap \mathbb{Z}^n$, where $F = [-\frac{1}{2}, \frac{1}{2})^n$.
\end{thrm}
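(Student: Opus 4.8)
The plan is to work directly in the $\|\cdot\|_2$ norm, which the singular-value hypothesis makes natural, and to show that the multidimensional Euclidean algorithm drives every $x_0 \in \mathbb{Z}^n$ into the ball $B_{\sqrt{n}}$ in finitely many steps, after which termination is automatic. The engine is a single contraction estimate on one step of the algorithm. Writing $x = Ay + r$ with $r \in D$, I would use that $A^{-1}r \in F$ (since $r \in AF$) to get $A^{-1}x - y = A^{-1}r \in F = [-\tfrac{1}{2},\tfrac{1}{2})^n$, so that $\|A^{-1}x - y\|_2 \le \tfrac{\sqrt{n}}{2}$. Combining this with $\|A^{-1}x\|_2 \le \tfrac{1}{\mu}\|x\|_2$, which holds because $1/\mu$ is the largest singular value of $A^{-1}$, yields the key bound
\[ \|y\|_2 \le \frac{1}{\mu}\|x\|_2 + \frac{\sqrt{n}}{2}. \]

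From here I would extract a threshold. The bound gives $\|y\|_2 < \|x\|_2$ as soon as $\|x\|_2 > \frac{\sqrt{n}\,\mu}{2(\mu-1)}$, and the role of the hypothesis $\mu > 2\sqrt{n}$ is that it forces $\mu > 2$, hence $\frac{\sqrt{n}\,\mu}{2(\mu-1)} < \sqrt{n}$. Thus one step strictly shrinks the $l^2$ norm of any vector lying outside $B_{\sqrt{n}}$, and the shrinking threshold sits strictly inside that ball. Because each $x_j$ is an integer vector, the norms $\|x_j\|_2$ strictly decrease while they exceed this threshold, and there are only finitely many integer vectors of norm at most $\|x_0\|_2$, the sequence cannot decrease forever: some $x_N$ must satisfy $\|x_N\|_2 \le \frac{\sqrt{n}\,\mu}{2(\mu-1)} < \sqrt{n}$.

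The final step uses the containment $B_{\sqrt{n}}\cap\mathbb{Z}^n \subset D$ recorded just before the theorem, which itself follows from $\|A^{-1}z\|_2 \le \frac{\sqrt{n}}{\mu} < \frac{1}{2}$ whenever $\|z\|_2 \le \sqrt{n}$ and $\mu > 2\sqrt{n}$. Since $\|x_N\|_2 < \sqrt{n}$, the vector $x_N$ is itself a digit, so its unique decomposition must read $x_N = A\mathbf{0} + x_N$, forcing $x_{N+1} = \mathbf{0}$ and terminating the algorithm. Hence every $x_0$ acquires a finite radix representation, which is what the theorem asserts. (Equivalently, one may phrase this as reusing the cycle analysis of Theorem \ref{pseudodigit_rep}: the only cycle available inside $B_{\sqrt{n}}$ is the trivial one at the origin, so no pseudodigits arise.)

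I expect the main obstacle to be not any single calculation but the careful dovetailing of the two roles played by $\mu > 2\sqrt{n}$: it must simultaneously make the per-step contraction threshold $\frac{\sqrt{n}\,\mu}{2(\mu-1)}$ lie below $\sqrt{n}$, and guarantee $B_{\sqrt{n}}\cap\mathbb{Z}^n\subset D$ so that every vector below that threshold is already a digit. Verifying that the contraction region and the digit ball overlap in exactly this way is where the constant $2\sqrt{n}$ is genuinely used, and one must also argue termination rigorously via finiteness of the integer lattice points inside the ball of radius $\|x_0\|_2$ rather than merely citing a decreasing real-valued norm.
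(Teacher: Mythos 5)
Your proof is correct, but it takes a genuinely different route from the paper's. The paper factors the argument through Theorem \ref{pseudodigit_rep}: it uses the $\| \cdot \|'$ norm to drive the orbit into a bounded set, invokes the cycle structure to write any putative pseudodigit as $x = A^{\ell}x + \tilde{d}_x$, solves this as a geometric series $x = \sum_{j\geq 1}(A^{\ell})^{-j}g_x$, and bounds $\|x\|_2 < \sqrt{n}$ to conclude that $x$ is actually a digit, so no nontrivial cycle exists. You instead prove a one-step contraction estimate directly in the $l^2$ norm, $\|y\|_2 \leq \frac{1}{\mu}\|x\|_2 + \frac{\sqrt{n}}{2}$, observe that $\mu > 2$ places the fixed-point threshold $\frac{\sqrt{n}\,\mu}{2(\mu-1)}$ strictly inside $B_{\sqrt{n}}$, and finish with the same containment $B_{\sqrt{n}} \cap \mathbb{Z}^n \subset D$. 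Your descent is sound: the strict decrease plus finiteness of lattice points of norm at most $\|x_0\|_2$ does force some $x_N$ below the threshold, and a vector that is itself a digit has $x_{N+1} = \mathbf{0}$ by uniqueness of the coset decomposition. What your approach buys is self-containedness and elementarity --- you never need the $\| \cdot \|'$ norm or the cycle analysis, essentially because the hypothesis $\mu > 2\sqrt{n} > 1$ already makes $A$ expansive in $l^2$, which is exactly what fails for general dilation matrices and forces the paper to introduce $\| \cdot \|'$ in Theorem \ref{pseudodigit_rep}. What the paper's route buys is reuse: the cycle machinery is already in place from the pseudodigit theorem, carries over to matrices that are not $l^2$-expansive, and its geometric-series bound on cycle elements is the template for the norm comparison in Remark \ref{rem_eigenvals}. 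You also correctly isolate the two distinct roles of the constant $2\sqrt{n}$ (only $\mu > 2$ is needed for the contraction, while the full $\mu > 2\sqrt{n}$ is needed for $B_{\sqrt{n}} \cap \mathbb{Z}^n \subset D$), a point the paper leaves implicit.
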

\begin{proof}
Let $x_0$ be a vector in $\mathbb{Z}^n$ and let $\{ x_j:\ j \geq 0 \}$ be the orbit of $x_0$ under repeated applications of the Euclidean algorithm.  That is, $x_0 = Ax_1 + r_0$, $x_1 = Ax_2 + r_1$, and so on, with the $r_j$ in our digit set for $j \geq 0$.  We showed in Theorem \ref{pseudodigit_rep} that there exists some $N = N(x_0) \geq 1$ such that either $x_j = r_j = \mathbf{0}$ for all $j > N$, or $x_N$ is a pseudodigit.  Recall that when $x_N$ is a pseudodigit, the orbit of $x_0$ under the Euclidean algorithm falls into a cycle that includes $x_N$, so that there exists an integer $l \geq 1$ and digits $\tilde{r}_0, \ldots, \tilde{r}_{l-1}$ such that 
\[ x_N = A^l x_N + \sum_{i=0}^{l-1} A^i \tilde{r}_i. \]

We use our canonical digit set $D = A(F) \cap \mathbb{Z}^n$, so that for each $j \geq 0$, $r_j = d$ for some $d \in D$.  Given any $x_0 \in \mathbb{Z}^n$ and $N \geq 1$ as above, if $x_j = \mathbf{0}$ for every $j > N$, then we have found a radix representation for $x_0$.  Thus suppose that $x_N$ is a pseudodigit.  Write $x$ in place of $x_N$, since the subscript is no longer necessary.  Note that $x_0 = A^{N-1} x + \tilde{d}$ for some $\tilde{d} \in D_{A,N}$, where
\[ D_{A,j} =\{ \sum_{i=0}^{j} A^id_i:\ d_i \in D \}\ \mbox{for any $j \geq 1$}. \]
Thus $x_0$ has a radix representation if and only if $x$ has a radix representation.

When $x$ is a pseudodigit, there exists an integer $\ell \geq 1$ and a vector $\tilde{d}_x \in D_{A,\ell}$ such that
\[ x = A^{\ell} x + \tilde{d}_x. \]
We rewrite this as
\begin{align*}
A^{-\ell} x &= x + A^{-\ell} \tilde{d}_x \\
       x &= A^{-\ell}x + g_x
\end{align*}
for $g_x = - A^{-\ell} \tilde{d}_x$.  Then
\begin{align*}
x &= g_x + A^{-\ell}x\\
  &= g_x + A^{-\ell}g_x + A^{-2\ell}x\\
  &\vdots \\
  &= \sum_{j=1}^{\infty} A^{-\ell j} g_x = \sum_{j=1}^{\infty} (A^{\ell})^{-j} g_x.
\end{align*}

We can now bound the size of $x$ as follows (since $A^{-lj}x \rightarrow \mathbf{0}$ as $j \rightarrow \infty$).
\[
\| x \|_2 \leq \sum_{j=1}^{\infty} \| (A^{\ell})^{-j} g_x \|_2 \leq \sum_{j=1}^{\infty} \left( \frac{1}{\mu^{\ell}} \right)^{j} \|g_x\|_2 = \frac{1}{\mu^{\ell} - 1} \|g_x\|_2,
\]
with
\begin{align*}
\| g_x \|_2 = \| -g_x \|_2 &\leq \sum_{j=0}^{\ell-1} \|A^{-\ell}A^{j} d_j\|_2 = \sum_{j'=1}^{\ell} \|A^{-j'} d_{\ell-j'}\|_2\\
                           &\leq \sum_{i=0}^{\ell-1} \|A^{-i} f_i\|_2 \quad \mbox{for some $f_i \in F$}\\
                           & \leq \sum_{i=0}^{\ell-1} \frac{1}{\mu^i} \frac{\sqrt{n}}{2} = \frac{\mu^{\ell}-1}{\mu^{\ell}} \frac{\mu}{\mu-1} \frac{\sqrt{n}}{2},
\end{align*}
since $D = A(F) \cap \mathbb{Z}^n$.  Thus
\[
\| x \|_2 \leq \frac{1}{\mu^{\ell} - 1} \frac{\mu^{\ell}-1}{\mu^{\ell}} \frac{\mu}{\mu - 1} \frac{\sqrt{n}}{2} = \frac{\sqrt{n}}{2\mu^{\ell-1}(\mu-1)}.
\]
Note that
\[ \frac{1}{2\mu^{\ell-1}(\mu-1)} < \frac{1}{2} \]
when $\mu > 2\sqrt{n} \geq 2$, thus $x \in B_{\sqrt{n}} \cap \mathbb{Z}^n \subset D$.  That is, $x$ is in fact a digit and not a pseudodigit, and $x_0$ has a radix representation.  Since our choice of $x_0$ was arbitrary, $A$ yields a radix representation for every $x \in \mathbb{Z}^n$, with digit set $D$.
\end{proof}

\begin{rem}\label{rem_eigenvals}
A similar calculation shows that $\|x\|' < \sqrt{n}$ as well; however the $l^2$ bounds are enough to show our result.  We could also replace the hypothesis $\mu > 2\sqrt{n}$ with the slightly weaker hypothesis $m > 2\sqrt{n}$ and follow through the same steps as above using the $\|\cdot\|'$ norm instead of the $\|\cdot\|_2$ norm.  We chose to state the theorem with the hypothesis on $\mu$ since the singular values of a matrix can easily be computed, whereas checking if $m > 2\sqrt{n}$ may be computationally more difficult.
\end{rem}

We note that any dilation matrix $A$ is eventually expanding in the $l^2$ norm, in the following sense.
\begin{cor}\label{big_enough}  For every dilation matrix $A$, there exists a positive integer $\beta \geq 1$ such that $A^{\beta}$ yields a radix representation with digit set $D_{\beta} = A^{\beta}(F) \cap \mathbb{Z}^n$.
\end{cor}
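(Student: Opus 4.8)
The plan is to apply Theorem \ref{rad_rep} to a suitable power of $A$. The only hypothesis that needs checking is that the minimum singular value of $A^\beta$ exceeds $2\sqrt{n}$ for some $\beta$. The main obstacle is that, when $A$ is not normal, the singular values of $A^\beta$ are \emph{not} the $\beta$-th powers of the singular values of $A$, so I cannot simply argue that $\mu^\beta > 2\sqrt{n}$. Instead I would express the minimum singular value in terms of an operator norm and invoke the same spectral radius formula already used in Lemma \ref{lemm_norm}.

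First I would observe that $A^\beta$ is itself a dilation matrix for every $\beta \geq 1$: its eigenvalues are the numbers $\lambda^\beta$ as $\lambda$ ranges over the eigenvalues of $A$, and $\abs{\lambda^\beta} = \abs{\lambda}^\beta > 1$. Hence the canonical digit set $D_\beta = A^\beta(F) \cap \mathbb{Z}^n$ is well defined, and Theorem \ref{rad_rep} applies to $A^\beta$ as soon as the corresponding singular value bound is established.

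Next, writing $\mu_\beta$ for the minimum singular value of $A^\beta$, I would use the identity $\mu_\beta = 1/\|A^{-\beta}\|_{op}$, which holds because the singular values of $(A^\beta)^{-1} = A^{-\beta}$ are the reciprocals of those of $A^\beta$, the largest of which is exactly the operator norm $\|A^{-\beta}\|_{op}$. By the spectral radius formula,
\[ \lim_{\beta \rightarrow \infty} \|A^{-\beta}\|_{op}^{1/\beta} = r(A^{-1}) = \frac{1}{\min\{\abs{\lambda}:\ \mbox{$\lambda$ an eigenvalue of $A$}\}} < 1, \]
since every eigenvalue of $A$ exceeds $1$ in modulus. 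Therefore $\|A^{-\beta}\|_{op} \rightarrow 0$, and consequently $\mu_\beta = 1/\|A^{-\beta}\|_{op} \rightarrow \infty$ as $\beta \rightarrow \infty$.

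Finally, I would fix a positive integer $\beta$ large enough that $\mu_\beta > 2\sqrt{n}$ and invoke Theorem \ref{rad_rep} applied to the dilation matrix $A^\beta$, concluding that $A^\beta$ yields a radix representation with digit set $D_\beta = A^\beta(F) \cap \mathbb{Z}^n$. The crux of the argument is precisely the reduction from controlling the minimum singular value of a power of a possibly non-normal matrix to controlling the decay of $\|A^{-\beta}\|_{op}$ via the spectral radius formula; once that identity and limit are in place, the remaining step is an immediate appeal to Theorem \ref{rad_rep}.
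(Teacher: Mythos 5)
Your proposal is correct and follows essentially the same route as the paper: both rest on the identity $\mu_\beta = 1/\|A^{-\beta}\|_{op}$ and on the spectral radius formula to conclude $\|A^{-\beta}\|_{op} \rightarrow 0$, hence $\mu_\beta \rightarrow \infty$, before invoking Theorem \ref{rad_rep}. The only cosmetic difference is that the paper deduces $\|A^{-j}\|_{op} \rightarrow 0$ from the convergence of $\sum_j \|A^{-j}\|_{op}^2$ already established in the proof of Theorem \ref{pseudodigit_rep}, whereas you apply the spectral radius formula directly; your explicit remark that $A^\beta$ is itself a dilation matrix is a welcome addition.
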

\begin{proof}
In the proof of Theorem \ref{pseudodigit_rep} we saw that 
\[ \sum_{j=0}^{\infty} \|A^{-j}\|_{op}^2 \]
converges (using the root test and the spectral radius formula).  This implies that
\[ \lim_{j \rightarrow \infty} \|A^{-j}\|_{op} = 0. \]
However,
\begin{align*}
\|A^{-j}\|_{op} &= \max{\{\sigma:\ \mbox{$\sigma$ a singular value of $A^{-j}$}\}}\\
                &= \left(\min{\{\sigma:\ \mbox{$\sigma$ a singular value of $A^j$}\}}\right)^{-1}
\end{align*}
for each $j \geq 0$.  Thus, setting
\[ \mu_j = \min{\{\sigma:\ \mbox{$\sigma$ a singular value of $A^j$}\}}, \]
for each $j \geq 1$, we see that
\[ \lim_{j \rightarrow \infty} \mu_j = \infty. \]
In particular, there exists an integer $\beta \geq 1$ such that 
\[ \mu_{\beta} > 2\sqrt{n}. \]
We then apply Theorem \ref{rad_rep} to the dilation matrix $A^{\beta}$.
\end{proof}

By the corollary, we see that we can recapture and improve upon Theorem \ref{rad_rep} using a result of Jeong~\cite{jeo} that the author has recently become aware of.  Jeong defines $C_0$ to be the set of vectors
\[ \left[ \begin{array}{r} \epsilon_1 \\ \epsilon_2 \\ \vdots \\ \epsilon_n \end{array} \right] \]
such that $\epsilon_i = 0$ for all but one index $i = i_0$, with $\epsilon_{i_0} = \pm 1$.  We then set
\[ C = C_0 \bigcup \{ \mathbf{0} \} = B_{1} \bigcap \mathbb{Z}^n. \]
\begin{thrm}[Jeong]\label{thrm_jeong}  Let $M$ be an $n \times n$ matrix with integer entries and $U := \left( -\frac{1}{2}, \frac{1}{2} \right]^n$.  If
\begin{enumerate}
\item $C \subset MU$ (or equivalently $C \subset MU \cap \mathbb{Z}^n$), and
\item $\lim_{j \rightarrow \infty} M^jU = \mathbb{R}^n$,
\end{enumerate}
then $M$ yields a radix representation of $\mathbb{Z}^n$.
\end{thrm}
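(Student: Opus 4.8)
The plan is to reduce the statement to the pseudodigit machinery already developed and then to exploit the two hypotheses geometrically. First I would record two preliminaries. Since $U=(-\frac12,\frac12]^n$ is a fundamental domain for $\mathbb{Z}^n$, the set $D:=MU\cap\mathbb{Z}^n$ is a complete set of coset representatives for $\mathbb{Z}^n/M(\mathbb{Z}^n)$, and hypothesis (1) says precisely that $C\subseteq D$; in particular $\mathbf{0}\in D$ and each standard basis vector $e_i$ (the nonzero elements of $C_0$) lies in $D$ together with $-e_i$. Hypothesis (2) forces $M$ to be invertible with every eigenvalue $\lambda$ satisfying $\abs{\lambda}>1$: if some $\abs{\lambda}\le 1$ then $M^j$ fails to expand in the corresponding invariant direction and $\bigcup_j M^jU$ stays bounded there, contradicting $M^jU\to\mathbb{R}^n$. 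Hence $M$ is a dilation matrix and Theorem \ref{pseudodigit_rep} applies. By that theorem the Euclidean orbit of every $x$ is eventually periodic, and $M$ yields a radix representation if and only if the only pseudodigit is $\mathbf{0}$. Thus the whole task reduces to excluding nonzero pseudodigits.

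Next I would translate ``pseudodigit'' into the language of the attractor. Let $T$ be the unique nonempty compact set with $MT=T+D$, namely $T=\{\sum_{k=1}^{\infty}M^{-k}d_k:\ d_k\in D\}$; note $\mathbf{0}\in T$ and $T\subseteq MT\subseteq M^2T\subseteq\cdots$ since $MT=T+D\supseteq T+\mathbf{0}=T$. Unwinding a period-$\ell$ pseudodigit exactly as in the proof of Theorem \ref{pseudodigit_rep}, from $x=M^{\ell}x+\tilde{d}_x$ one obtains $-x=\sum_{k=1}^{\infty}M^{-k}d_k'$ with the $d_k'\in D$ forming an $\ell$-periodic string, so every pseudodigit $x$ satisfies $-x\in T\cap\mathbb{Z}^n$. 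It therefore suffices to prove $T\cap\mathbb{Z}^n=\{\mathbf{0}\}$.

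The geometric heart of the argument is to show that hypothesis (1) forces $\mathbf{0}\in\operatorname{int}(T)$. Because $C\subseteq D$, each of the $2n$ translates $T\pm e_i$ is a neighbour of $T$ meeting it near the origin, and one argues that these neighbours surround $\mathbf{0}$, placing it in the interior of $T$. Since $U$ is a half-open fundamental domain and $M$ is expanding, the family $\{T+v:\ v\in\mathbb{Z}^n\}$ is a tiling (measure-disjoint cover) of $\mathbb{R}^n$; an interior lattice point $v\neq\mathbf{0}$ of $T$ would then give $\mathbf{0}\in\operatorname{int}(T)\cap\operatorname{int}(T-v)$, so $T$ and $T-v$ overlap in positive measure, which is impossible. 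Hence $T\cap\mathbb{Z}^n=\{\mathbf{0}\}$, and by the reduction above every pseudodigit is $\mathbf{0}$, so every $x\in\mathbb{Z}^n$ is representable.

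I expect the main obstacle to be exactly the claim that $C\subseteq D$ forces $\mathbf{0}\in\operatorname{int}(T)$, together with the disposal of lattice points lying on $\partial T$ rather than in $\operatorname{int}(T)$: controlling the boundary of a self-affine tile and verifying that the neighbour translates $T\pm e_i$ genuinely cover a full neighbourhood of the origin is delicate, and this is where the strength of hypothesis (1) must be used in full (the half-open choice of $U$, which makes the canonical digit and hence each pseudodigit expansion unique, is what should rule out the boundary cases). As a consistency check, in dimension one $M=\beta$ and $C\subseteq MU$ reads $\abs{\beta}\ge 3$, recovering Matula's threshold; and this qualitative tiling argument is precisely what lets Jeong replace the quantitative bound $\mu>2\sqrt{n}$ of Theorem \ref{rad_rep}, which instead drives each pseudodigit into $B_{\sqrt n}\subseteq D$ by a direct norm estimate, with the single inclusion $C\subseteq MU$.
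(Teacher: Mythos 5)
First, a point of order: the paper offers no proof of this statement. Theorem~\ref{thrm_jeong} is quoted from Jeong~\cite{jeo} and used as a black box to derive Corollary~\ref{best_rad_rep}, so there is no in-paper argument to compare yours against; your proposal has to stand on its own. The reduction you set up is sound as far as it goes: hypothesis (2) does force $M$ to be a dilation matrix (a left eigenvector for an eigenvalue with $\abs{\lambda}\leq 1$ confines every $M^jU$ to a fixed slab), the pseudodigit machinery of Theorem~\ref{pseudodigit_rep} (whose proof uses only boundedness of the digit set, so it applies verbatim with $U$ in place of $F$) makes every Euclidean orbit eventually periodic, and your unwinding of a period-$\ell$ cycle $x=M^{\ell}x+\tilde{d}_x$ into $-x=\sum_{k\geq 1}M^{-k}d_k'\in T$ is correct. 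So you have correctly reformulated the theorem as the assertion $T\cap\mathbb{Z}^n=\{\mathbf{0}\}$.

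The gap is that everything after this reformulation is asserted rather than proved, and it is precisely there that hypothesis (1) has to do all of its work. You need three nontrivial facts: (i) $\mathbf{0}\in\operatorname{int}(T)$; (ii) $\{T+v:\ v\in\mathbb{Z}^n\}$ is a measure-disjoint tiling of $\mathbb{R}^n$; (iii) no nonzero lattice point lies on $\partial T$. For (i), ``the neighbours $T\pm e_i$ surround $\mathbf{0}$'' is not an argument --- a priori $T$ could have empty interior, and the inclusion $C\subset D$ only yields $MT\supseteq T+C$, which must be iterated and combined with an expansion estimate before it produces an open neighbourhood of the origin. Fact (ii) is essentially a theorem of Lagarias and Wang~\cite{lw1}: one must show $T$ has positive Lebesgue measure and that the tiling lattice is all of $\mathbb{Z}^n$ rather than a proper sublattice (compare $M=2$, $D=\{0,3\}$ in dimension one, where $T=[0,3]$ tiles only by $3\mathbb{Z}$ and $T\cap\mathbb{Z}\neq\{0\}$); the hypothesis $\pm e_i\in D$ is what should rule this out, but that has to be argued. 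And (iii) is not addressed at all: distinct tiles of a tiling share boundary points, so measure-disjointness says nothing about a pseudodigit whose negative lands on $\partial T$. You flag (i) and (iii) yourself as ``the main obstacle,'' but that obstacle \emph{is} the theorem: what you have written is a correct reduction plus an IOU for its entire content, not a proof.
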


\begin{cor}\label{best_rad_rep}
Let $A$ be an $n \times n$ dilation matrix.  If
\[ \mu > 2 \]
then $A$ yields a radix representation of $\mathbb{Z}^n$ with the canonical digit set $D = AF \cap \mathbb{Z}^n$, where $F = [-\frac{1}{2}, \frac{1}{2})^n$.
\end{cor}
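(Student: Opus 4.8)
The plan is to deduce the corollary from Jeong's criterion (Theorem~\ref{thrm_jeong}) applied to the matrix $M = A$, rather than redoing the orbit estimates of Theorem~\ref{rad_rep}. The hypothesis $\mu > 2$ is weaker than $\mu > 2\sqrt{n}$ precisely because Jeong's set $C = B_1 \cap \mathbb{Z}^n$ is much smaller than the set $B_{\sqrt{n}} \cap \mathbb{Z}^n$ that the proof of Theorem~\ref{rad_rep} must absorb into the digit set: $C$ consists only of $\mathbf{0}$ and the vectors $\pm e_i$, all of $l^2$-norm at most $1$.

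Before verifying Jeong's two hypotheses I would reconcile the digit sets. Jeong uses the cube $U = (-\tfrac{1}{2}, \tfrac{1}{2}]^n$, whereas our canonical digit set is built from $F = [-\tfrac{1}{2}, \tfrac{1}{2})^n$; since $U = -F$ and $A$ is linear, $AU \cap \mathbb{Z}^n = -(AF \cap \mathbb{Z}^n) = -D$. The map $x \mapsto -x$ is a bijection of $\mathbb{Z}^n$ carrying a representation $x = \sum_j A^j d_j$ to $-x = \sum_j A^j(-d_j)$, so $A$ yields a radix representation with digit set $-D$ if and only if it does so with $D$. Hence it suffices to apply Jeong's theorem and obtain a radix representation with digit set $AU \cap \mathbb{Z}^n = -D$.

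For Jeong's first hypothesis $C \subset AU$, note that $\mu > 2$ forces the largest singular value of $A^{-1}$, namely $\|A^{-1}\|_{op} = 1/\mu$, to be strictly less than $\tfrac{1}{2}$. Then for each standard basis vector $e_i$ we have $\|A^{-1}(\pm e_i)\|_2 \le \|A^{-1}\|_{op} < \tfrac{1}{2}$, so every coordinate of $A^{-1}(\pm e_i)$ has absolute value $< \tfrac{1}{2}$, placing $A^{-1}(\pm e_i)$ inside $U$; together with $A^{-1}\mathbf{0} = \mathbf{0} \in U$ this gives $C \subset AU$. This is the step where the hypothesis is used, and it is exactly tight: the basis vectors have norm $1$, so $\mu > 2$ is just what is needed to push their $A^{-1}$-images into the half-open cube, and no more. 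I expect this to be the conceptual heart of the argument, even though the computation is short.

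For Jeong's second hypothesis, $\lim_{j \to \infty} A^j U = \mathbb{R}^n$, I would use that $A$ is a dilation matrix. The proof of Corollary~\ref{big_enough} showed that $\mu_j := \min\{\sigma:\ \mbox{$\sigma$ a singular value of $A^j$}\} \to \infty$. Since $U$ contains a ball $B_{\epsilon}$ about the origin and $A^j$ maps $B_{\epsilon}$ onto an ellipsoid whose smallest semi-axis has length $\mu_j \epsilon$, we get $A^j U \supseteq B_{\mu_j \epsilon}$, and these balls exhaust $\mathbb{R}^n$. With both hypotheses verified, Theorem~\ref{thrm_jeong} yields a radix representation with digit set $-D$, and the sign symmetry noted above upgrades this to the stated digit set $D$.
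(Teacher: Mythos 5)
Your proposal is correct and follows essentially the same route as the paper: apply Jeong's theorem to $A$, verify $C \subset AU$ from $\mu > 2$ via $\|A^{-1}\|_{op} = 1/\mu < \tfrac{1}{2}$, and get $\lim_j A^jU = \mathbb{R}^n$ from the fact (established in Corollary~\ref{big_enough}) that $\mu_j \to \infty$. Your explicit reconciliation of $U = -F$ via the sign symmetry $x \mapsto -x$ is a more careful treatment of a point the paper dismisses with ``using Jeong's fundamental domain $U$ is equivalent to using our fundamental domain $F$,'' but it does not change the substance of the argument.
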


\begin{proof}  Note that using Jeong's fundamental domain $U$ is equivalent to using our fundamental domain $F$.  If $A$ is a dilation matrix, then $\lim_{j \rightarrow \infty} A^j F = \mathbb{R}^n$ by Corollary \ref{big_enough}.  If $\mu > 2$, then $B_{1} \subset AB_{1/2}$.  Since $B_{1/2} \subset F$ (equivalently, $B_{1/2} \subset U$) and $C \subset B_{1}$, this implies that $C \subset AF$ (equivalently, $C \subset AU$).  The result then follows from Theorem~\ref{thrm_jeong}.
\end{proof}

\begin{rem}
Consider the dilation matrix $A = \textrm{diag}\{2, 2, \ldots, 2\}$, the $n \times n$ matrix with every diagonal element $2$ and every non-diagonal element $0$.  As in the first example in the following section, this matrix does not yield a radix representation of $\mathbb{Z}^n$.  However, for this matrix $\mu = 2$, and indeed every eigenvalue $\lambda = 2$.  Thus the sufficient condition $\mu > 2$ in Corollary~\ref{best_rad_rep} is sharp.
\end{rem}

\begin{rem}
Unlike in the proof of Theorem~\ref{rad_rep}, we cannot use the $\| \cdot \|'$ norm to replace the condition $\mu > 2$ with the same condition on the eigenvalues of the dilation matrix $A$.
\end{rem}

\section{Examples}\label{sec_ex}

\subsection{Pseudodigit Representations in One Dimension}
When $n=1$ and $A=2$, the digit set $D = A(F) \cap \mathbb{Z}$ is
\[ D = [-1,1) \cap \mathbb{Z} = \{ 0, -1 \}. \]
Note that this is not the standard digit set $\tilde{D} = \{0,1\}$ for binary numbers, so we are able to represent negative integers rather than positive integers with a radix representation.  In this case, $m = M = 2$ and $R = 1$.  Thus we only need check the set $\{ -1, 0, 1 \}$ to find the set $S$ of pseudodigits for $A$.  It is easy to see that $-1$ and $0$ have radix representations.
\begin{align*}
-1 &= 2^{0}(-1)\\
0 &= 2^{0}(0)
\end{align*}
However, if we apply the Euclidean algorithm to $1$, we find that
\begin{align*}
1 &= 2(1) + (-1)\\
1 &= 2(1) + (-1)\\
 \vdots&
\end{align*}
The algorithm does not terminate, but repeats through the cycle $(1)$ (of length one).  Thus we have one pseudodigit.  Since $1$ is the only element in this cycle, we will take $1$ to be our cycle representative.  Then $S = \{1\}$, and $A=2$ yields a pseudodigit representation.

\subsection{Radix Representations in One Dimension}
If we take $A = -2$, then again $D = \{0,-1\}$ and $R = 1$.  In this case, however, every $x$ within the ball of radius $R = 1$ centered at $0$ has a radix representation:
\begin{align*}
-1 &= (-2)^{0}(-1)\\
0 &= (-2)^{0}(0)\\
1 &= (-2)^{1}(-1) + (-2)^{0}(-1)\\
\end{align*}
Thus $A=-2$ yields a radix representation.

When $A = \beta \in \mathbb{Z}$ with $\abs{\beta} \geq 3$, Theorem \ref{pseudodigit_rep} shows that $A$ yields a radix representation for $\mathbb{Z}$ with our standard digit set $D$.

\subsection{The Twin Dragon}
The twin dragon matrix is the matrix\footnote{The twin dragon matrix gets its name from the shape of the set \[ T = \{ \sum_{j=1}^{\infty} A^{-j}d_j:\ d_j \in D\}. \]  For more information, see \cite{woj}.}
\[ A = \mat{1}{1}{-1}{1}. \]
It is one of the smallest examples of an expanding $2 \times 2$ matrix, and dilation by the twin dragon is analogous to dilation by $2$ in many ways.  The set $A([-\frac{1}{2},\frac{1}{2})^2)$ is shown in figure \ref{fig:A(F^2)}.

\begin{figure}[t]
\begin{center}
\includegraphics{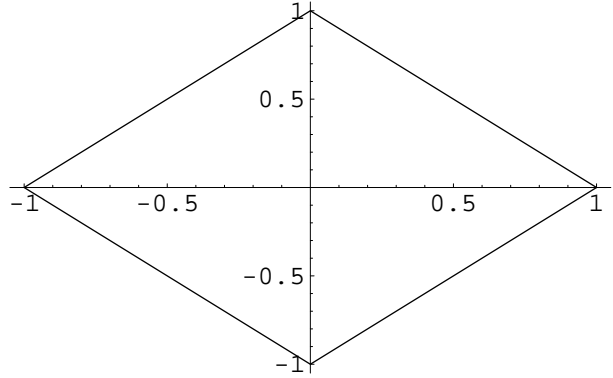}
\end{center}
\caption{$A(F)$ in two dimensions}\label{fig:A(F^2)}
\end{figure}

The digit set $D$ for the twin dragon is the intersection of this set with $\mathbb{Z}^2$:
\[ D = \{d_0 = \vect{0}{0}, d_1 = \vect{-1}{0}\}. \]
(Note that $\det{A} = 2 = |D|$.)  As in the case of dilation by $2$, we have only two digits, and we need one pseudodigit for a pseudodigit representation of all vectors in $\mathbb{Z}^2$:
\[ s = \vect{0}{1}. \]
We can then partition $\mathbb{Z}^2$ into ``negative'' and ``positive'' vectors (those with radix representations and those with pseudodigit representations, respectively), as in the case of dilation by $2$.  Figure \ref{fig:td_both_reps} shows the set of points representable with a radix representation $\sum_{j=0}^{N}A^jd_j$ with $N \leq 6$ and $d_j \in D$ together with the set of points representable with a pseudodigit representation $A^ns + \sum_{j=0}^{N-1} A^jd_j$ with $N \leq 6$ and $D_j \in D$.  

\begin{figure}[b]
\begin{center}
\includegraphics{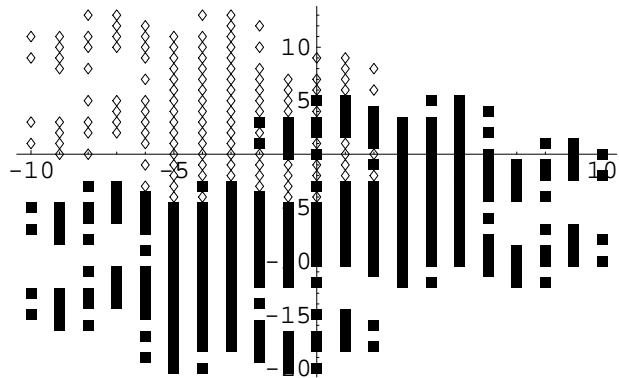}
\end{center}
\caption[Radix and pseudodigit representations with $N=6$.]{Radix and pseudodigit representations with $N=6$.  Boxed point are those with a radix representation.  Diamond points are those with a pseudodigit representation.}\label{fig:td_both_reps}
\end{figure}

In order to satisfy the sufficient condition for a radix representation of $\mathbb{Z}^2$, we must square the twin dragon matrix:
\[ B = A^2 = \mat{0}{2}{-2}{0}. \]
Then $\det{B}=4$ and the eigenvalues of $B$ are $\lambda_1=i\sqrt{2}$ and $\lambda_2=-i\sqrt{2}$, with $|\lambda_1| = |\lambda_2| = \sqrt{2}$.

\subsection{A Higher Dimensional Example}
Another dilation matrix of interest in the study of multivariable wavelets is the following matrix.\footnote{This matrix, found by Lagarias and Wang \cite{lw2}, is the smallest known example of a dilation matrix for which a Haar-like scaling function for a wavelet set cannot be constructed.}
\[ A = \left[\begin{array}{rrrr} 0 & 1 & 0 & 0 \\ 0 & 0 & 1 & 0 \\ 0 & 0 & -1 & 2 \\ -1 & 0 & -1 & 1 \end{array}\right]. \]
This matrix has determinant $\det{A} = 2$ and eigenvalues $\lambda_1=-\sqrt{-\frac{1}{2}-\frac{i\sqrt{7}}{2}}$, $\lambda_2=-\sqrt{-\frac{1}{2}+\frac{i\sqrt{7}}{2}}$, $\lambda_3=\sqrt{-\frac{1}{2}+\frac{i\sqrt{7}}{2}}$, and $\lambda_4=\sqrt{-\frac{1}{2}-\frac{i\sqrt{7}}{2}}$. The digit set for this matrix is
\[ D = \{ d_0 = \left[\begin{array}{r}0\\0\\0\\0\end{array}\right], d_1 = \left[\begin{array}{r}0\\0\\-1\\-1\end{array}\right] \}. \]
In this example, there are two pseudodigits:
\[ S = \{ s_1 = \left[\begin{array}{r}0\\1\\0\\0\end{array}\right], s_2 = \left[\begin{array}{r}-1\\0\\0\\0\end{array}\right] \}. \]

\section{General Lattices}\label{sec_comm}

Any $n$-dimensional point lattice $\Gamma$ in $\mathbb{R}^n$ can be expressed as $M(\mathbb{Z}^n)$ for some matrix $M \in M_{n}(\mathbb{R})$, where $M$ has full rank, and is thus invertible \cite{cos}.  That is, any $n$-dimensional lattice $\Gamma$ in $\mathbb{R}^n$ is isomorphic to $\mathbb{Z}^n$.  Similarly, we can consider point lattices $\Gamma$ in $\mathbb{C}^n$, where $\Gamma = L\mathbb{Z}^n$ for some invertible matrix $L \in M_{n}(\mathbb{C})$, and thus is also isomorphic to $\mathbb{Z}^n$.  

Let $\Gamma$ be an $n$-dimensional point lattice in $\mathbb{R}^n$ or in $\mathbb{C}^n$, as above.  Let $e_1, \ldots, e_n$ be the canonical basis for $\mathbb{Z}^n$: for each $i = 1, \ldots, n$, $e_i$ has $i^{th}$ entry $1$ and all other entries $0$.  Then $f_i := Me_i$ (or $f_i := Le_i$), $i = 1, \ldots, n$, is a basis for $\Gamma$.  For each $x$ in $\Gamma$, we may write
\[ x = x_1 f_1 + x_2 f_2 + \cdots + x_n f_n, \]
for some coefficients $x_i \in \mathbb{Z}$, $i=1, \ldots, n$.  Under the norm
\[ \|x\|_{l^2(\Gamma)} := \left( \sum_{i=1}^{n} |x_i|^2 \right)^{1/2}, \]
the lattice $\Gamma$ is isometrically isomorphic to $\mathbb{Z}^n$.

We call an $n \times n$ matrix $A$ a dilation matrix for the lattice $\Gamma$ if it gives a mapping $A: \Gamma \rightarrow \Gamma$ with nontrivial cokernel.  That is, $A$ is a dilation matrix for $\Gamma$ if $A=MB$ for some dilation matrix $B$ (for $\mathbb{Z}^n$), where $M$ is the matrix giving the isomorphism between $\mathbb{Z}^n$ and $\Gamma$, $\Gamma = M\mathbb{Z}^n$.  Notice that $A$ yields a radix representation of $\Gamma$ if and only if $B$ yields a radix representation of $\mathbb{Z}^n$.  The digit set for this radix representation can be written as $\Gamma \cap A(\tilde{F})$ for some fundamental domain $\tilde{F}$ for the lattice $\Gamma$.  Define $F_{\Gamma}$ to be the specific fundamental domain
\[ F_{\Gamma} = MF = M([-\frac{1}{2},\frac{1}{2})^n). \]

Let $A$ be a dilation matrix for the lattice $\Gamma$.  Corresponding to the norm $\| \cdot \|'$ on $\mathbb{Z}^n$, we may define the norm
\[ \| \cdot \|_{\Gamma}' := \sum_{j=0}^{\infty} \|A^{-j} \cdot \|_{l^2(\Gamma)} \]
on $\Gamma$.  We can now restate our results in greater generality.

\begin{thrm}[Pseudodigit Representation]
Let $\Gamma = M\mathbb{Z}^n$ be a lattice in $\mathbb{R}^n$ or $\mathbb{C}^n$.  Let $A$ be a dilation matrix for $\Gamma$.  Let $D = A(F_{\Gamma}) \cap \Gamma$.  There exists a finite set $S \subset \Gamma$ such that every vector $x \in \Gamma$ can be written in exactly one of the following forms:
\begin{enumerate}
\item $x = \sum_{j=0}^{N(x)} A^j d_j$, where $d_j \in D$ for each $j$ and $N(x) \geq 0$;
\item $x = A^{N(x)}s + \sum_{j=0}^{N(x)-1} A^j d_j$, where $d_j \in D$, $s \in S$, and $N(x) \geq 0$.
\end{enumerate}
\end{thrm}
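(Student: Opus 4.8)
The plan is to prove this by transport of structure: the lattice $\Gamma = M\mathbb{Z}^n$ is, by construction, \emph{isometrically} isomorphic to $\mathbb{Z}^n$ under the norm $\|\cdot\|_{l^2(\Gamma)}$, so I would reduce the entire statement to Theorem \ref{pseudodigit_rep}, which has already been established on $\mathbb{Z}^n$.

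First I would make the intertwining explicit. Since $A$ maps $\Gamma = M\mathbb{Z}^n$ into itself, the matrix $B := M^{-1}AM$ has integer entries, and it is a dilation matrix for $\mathbb{Z}^n$ (its eigenvalues coincide with those of $A$). The relation $A^j = MB^jM^{-1}$, equivalently $A^j M = M B^j$ for all $j \ge 0$, is the only algebraic fact I need, and it holds by definition of $B$. Because $\|x\|_{l^2(\Gamma)} = \|M^{-1}x\|_2$ for every $x$ in the span of $\Gamma$, the map $M$ is an isometry from $(\mathbb{Z}^n, \|\cdot\|_2)$ onto $(\Gamma, \|\cdot\|_{l^2(\Gamma)})$, and a short computation shows it carries the norm $\|\cdot\|'$ to its analogue $\|\cdot\|_\Gamma'$; thus $A$ is expanding on $(\Gamma, \|\cdot\|_\Gamma')$ exactly as $B$ is on $(\mathbb{Z}^n, \|\cdot\|')$.

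Next I would check that the two Euclidean algorithms correspond under $M^{-1}$. The crucial point is that $M^{-1}$ carries the digit set $D = A(F_\Gamma) \cap \Gamma$ onto the canonical digit set $D_B := B(F) \cap \mathbb{Z}^n$ for $B$: since $F_\Gamma = MF$, we have $M^{-1}\big(A(MF) \cap M\mathbb{Z}^n\big) = (M^{-1}AM)(F) \cap \mathbb{Z}^n = B(F)\cap\mathbb{Z}^n$. I would also note that $M^{-1}$ takes coset representatives of $\Gamma/A\Gamma$ to coset representatives of $\mathbb{Z}^n/B\mathbb{Z}^n$, so the step $x = Ay + r$ on $\Gamma$ pulls back to $M^{-1}x = B(M^{-1}y) + M^{-1}r$ on $\mathbb{Z}^n$, with $M^{-1}r \in D_B$. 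Hence the orbit of $x$ under the algorithm on $\Gamma$ is the image under $M$ of the orbit of $M^{-1}x$ under the algorithm on $\mathbb{Z}^n$. With the correspondence in hand, I would apply Theorem \ref{pseudodigit_rep} to $B$ on $\mathbb{Z}^n$, obtaining a finite pseudodigit set $S_B$ and, for each $y \in \mathbb{Z}^n$, exactly one of the two representations. Setting $S := M(S_B) \subset \Gamma$ (finite, since $M$ is a bijection) and applying $M$ to each representation — using $A^j(Md_j) = M(B^jd_j)$ — turns a radix representation $y = \sum_{j=0}^{N} B^j d_j$ into $x = \sum_{j=0}^N A^j(Md_j)$ with $Md_j \in D$, and a pseudodigit representation of $y$ into one of $x$ with pseudodigit $Ms \in S$. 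Since $M$ is a bijection, \emph{exactly} one form holds for $x$ if and only if exactly one holds for $y := M^{-1}x$, giving the dichotomy.

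I expect the main obstacle to be the bookkeeping in the previous paragraph: verifying cleanly that $M^{-1}$ sends $D$ onto $D_B$ and coset representatives to coset representatives, and keeping the intertwining $A^jM = MB^j$ straight so that digits and pseudodigits land in the correct sets. Everything else is a formal consequence of the fact that $M$ is an isometric lattice isomorphism that conjugates $A$ into the integer dilation matrix $B$, for which the theorem is already proved.
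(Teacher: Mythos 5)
Your proof is correct and is exactly the argument the paper intends: the paper gives no written proof of the lattice version, simply asserting that the results ``restate in greater generality'' via the isometric isomorphism $M$, and your explicit conjugation $B = M^{-1}AM$, together with the transport of the digit set ($M^{-1}D = B(F)\cap\mathbb{Z}^n$) and of the pseudodigit set ($S = M(S_B)$), supplies precisely the bookkeeping the paper omits. (One small point of reconciliation: the paper loosely writes $A = MB$ for the relation between $A$ and its integer model $B$, whereas your conjugation reading $A = MBM^{-1}$ is the one under which $A$ actually maps $\Gamma$ into itself and the section's eigenvalue and singular-value claims hold, so your formulation is the right one.)
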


Let
\[ \mu' := \min{\{ \|Ax\|_{l^2(\Gamma)}:\ x \in \Gamma, \|x\|_{l^2(\Gamma)} = 1 \}}. \]
Note that $\mu$ is equal to the smallest singular value of $B = M^{-1}A$ (where $B$ is a dilation matrix for $\mathbb{Z}^n$).

\begin{thrm}[Radix Representation]
Let $\Gamma = M\mathbb{Z}^n$ be a lattice in $\mathbb{R}^n$ or $\mathbb{C}^n$.  Let $A$ be a dilation matrix for $\Gamma$.  If $\mu' > 2$ then $A$ yields a radix representation of $\Gamma$ with digit set $D = A(F_{\Gamma}) \cap \Gamma$.
\end{thrm}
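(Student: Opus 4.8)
The plan is to reduce the statement to the case $\Gamma = \mathbb{Z}^n$, which is already settled by Corollary~\ref{best_rad_rep}, by transporting every ingredient through the coordinate isomorphism $\phi := M^{-1}$ from $(\Gamma, \|\cdot\|_{l^2(\Gamma)})$ onto $(\mathbb{Z}^n, \|\cdot\|_2)$. Concretely, each $x \in \Gamma$ is $x = Mv$ for a unique $v \in \mathbb{Z}^n$, and $\phi(x) = v$ is a bijection of $\Gamma$ onto $\mathbb{Z}^n$; by the very definition of $\|\cdot\|_{l^2(\Gamma)}$ it satisfies $\|\phi(x)\|_2 = \|x\|_{l^2(\Gamma)}$, so $\phi$ is an \emph{isometry}. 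Under $\phi$ the action of $A$ on $\Gamma$ becomes the action on $\mathbb{Z}^n$ of the conjugate matrix $B := M^{-1}AM$, which is an integer matrix because $A\Gamma \subseteq \Gamma$, and which has the same eigenvalues as $A$ and is therefore a dilation matrix for $\mathbb{Z}^n$.

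Next I would verify that $\phi$ matches the two digit sets and intertwines the two Euclidean algorithms. Since $F_\Gamma = MF$ and $A = MBM^{-1}$, we have $A(F_\Gamma) = MB F$, hence $A(F_\Gamma)\cap\Gamma = M\bigl(B(F)\cap\mathbb{Z}^n\bigr)$ and so $\phi(D) = B(F)\cap\mathbb{Z}^n$ is exactly the canonical digit set for $B$. More generally $\phi$ carries the cosets of $\Gamma/A\Gamma$ bijectively onto the cosets of $\mathbb{Z}^n/B\mathbb{Z}^n$, so a decomposition $x = Ay + r$ in $\Gamma$ corresponds term by term to $\phi(x) = B\phi(y) + \phi(r)$ in $\mathbb{Z}^n$. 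Thus the Euclidean algorithm of Theorem~\ref{pseudodigit_rep} applied to $x$ is the $\phi$-image of the algorithm applied to $\phi(x)$, and one terminates if and only if the other does.

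It then remains to push the hypothesis across. Because $\phi$ is an isometry, for a unit vector $v$ we have $\|Bv\|_2 = \|A(Mv)\|_{l^2(\Gamma)}$ with $\|Mv\|_{l^2(\Gamma)} = 1$, so the smallest singular value of $B$ equals $\mu'$. Hence $\mu' > 2$ is precisely the hypothesis $\mu > 2$ of Corollary~\ref{best_rad_rep} applied to $B$, which gives a radix representation of $\mathbb{Z}^n$ with digit set $B(F)\cap\mathbb{Z}^n$. Pulling this back through $M$, using $M B^j M^{-1} = A^j$ and $M\bigl(B(F)\cap\mathbb{Z}^n\bigr) = D$, turns each finite expansion $v = \sum_{j=0}^{N} B^j d_j$ into $x = Mv = \sum_{j=0}^{N} A^j(Md_j)$ with digits $Md_j \in D$, which is the desired radix representation of $x$.

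The only genuinely substantive points, and the places I would take most care, are the two compatibility claims: that $\phi$ is an isometry for the stated norms, so that the singular-value hypothesis $\mu' > 2$ maps \emph{exactly} onto the hypothesis of Corollary~\ref{best_rad_rep}, and that conjugation by $M$ identifies $A(F_\Gamma)\cap\Gamma$ with the canonical digit set $B(F)\cap\mathbb{Z}^n$ coset by coset. Once these are checked the remainder is a formal transport of structure; in particular no new analytic estimate is required, since all of the hard work was already carried out for $\mathbb{Z}^n$.
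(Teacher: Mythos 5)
The paper states this lattice version without proof, presenting it as a restatement of Corollary~\ref{best_rad_rep} ``in greater generality,'' and your transport-of-structure argument through the isometry $\phi = M^{-1}$ is exactly the reduction the paper implicitly relies on; it is correct, including the two compatibility checks you single out (the isometry matching $\mu'$ with $\sigma_{\min}(B)$, and $M$ carrying $B(F)\cap\mathbb{Z}^n$ onto $A(F_\Gamma)\cap\Gamma$ coset by coset). The one point worth flagging is that you take $B = M^{-1}AM$ while Section~\ref{sec_comm} of the paper writes $A = MB$, i.e.\ $B = M^{-1}A$; your conjugated version is the one consistent with $A$ mapping $\Gamma$ into $\Gamma$ by left multiplication and with $\mu'$ being the smallest singular value of $B$, so your reading is the correct one (and note that the hypothesis $\mu' > 2$ by itself already forces every eigenvalue of $B$ to exceed $1$ in modulus, so $B$ is a dilation matrix without any appeal to the expansiveness of $A$).
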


As a result of the Radix Representation Theorem, we also have the following result.

\begin{lemm}
For every dilation matrix for $\Gamma$, $A$, there exists a positive integer $\beta \geq 1$ such that $A^{\beta}$ yields a radix representation of $\Gamma$ with digit set $D_{\beta} = A^{\beta}(F_{\Gamma}) \cap \Gamma$.
\end{lemm}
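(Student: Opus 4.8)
The plan is to imitate the proof of Corollary \ref{big_enough}, carrying out the singular-value bookkeeping in the lattice norm $\|\cdot\|_{l^2(\Gamma)}$ rather than in the Euclidean norm. The Radix Representation Theorem for $\Gamma$ asserts that $A$ yields a radix representation as soon as $\mu' > 2$; so it suffices to exhibit a power $A^{\beta}$ of $A$ whose associated quantity $\mu'$ exceeds $2$, and then to apply that theorem to $A^{\beta}$ in place of $A$.

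First I would observe that $A^{\beta}$ is itself a dilation matrix for $\Gamma$ for every $\beta \geq 1$: it maps $\Gamma$ into $\Gamma$ with index $[\Gamma : A^{\beta}\Gamma] = [\Gamma : A\Gamma]^{\beta} \geq 2^{\beta}$, so its cokernel is nontrivial, and its eigenvalues are the $\beta$-th powers of those of $A$, hence still of modulus greater than $1$. For each $j \geq 1$ let $\mu'_j := \min\{\|A^{j}x\|_{l^2(\Gamma)} : x \in \Gamma,\ \|x\|_{l^2(\Gamma)} = 1\}$ denote the quantity $\mu'$ attached to $A^{j}$. Extending $\|\cdot\|_{l^2(\Gamma)}$ to the ambient real span of $\Gamma$ and writing $\|\cdot\|$ for the operator norm it induces, the substitution $y = A^{j}x$ shows that $\mu'_j = \|A^{-j}\|^{-1}$.

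Next I would show $\mu'_j \to \infty$, equivalently $\|A^{-j}\| \to 0$. Since $A$ is a dilation matrix for $\Gamma$ it is represented, in the lattice basis, by a dilation matrix $B$ for $\mathbb{Z}^n$; as $A$ and $B$ are similar, every eigenvalue of $A$ has modulus greater than $1$, so the spectral radius of $A^{-1}$ is less than $1$. The spectral radius formula $r(A^{-1}) = \lim_{j \to \infty} \|A^{-j}\|^{1/j}$, exactly as used in the proof of Corollary \ref{big_enough}, then forces $\|A^{-j}\| \to 0$ and hence $\mu'_j \to \infty$. Choosing $\beta$ with $\mu'_{\beta} > 2$ and invoking the Radix Representation Theorem for $\Gamma$ with $A^{\beta}$ gives a radix representation of $\Gamma$ with digit set $A^{\beta}(F_{\Gamma}) \cap \Gamma = D_{\beta}$, as required.

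The step needing the most care is the identity $\mu'_j = \|A^{-j}\|^{-1}$ together with $\|A^{-j}\| \to 0$: one must insist that the operator norm be the one induced by the lattice norm $\|\cdot\|_{l^2(\Gamma)}$ and not by the ambient Euclidean norm, since the hypothesis of the Radix Representation Theorem for $\Gamma$ is phrased through $\mu'$. Because the spectral radius is independent of the choice of norm, however, the convergence argument transfers verbatim from Corollary \ref{big_enough} once this identification is made, and no genuinely new estimate is required.
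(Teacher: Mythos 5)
Your proof is correct and is essentially the argument the paper intends: the paper states this lemma without proof, as an immediate consequence of the lattice Radix Representation Theorem, and your argument is precisely the proof of Corollary~\ref{big_enough} transported to the lattice setting. Your one point of care --- that $\mu'_j$ and the operator norm must be computed with respect to $\|\cdot\|_{l^2(\Gamma)}$ (equivalently, via the smallest singular value of the matrix $B^j$ representing $A^j$ in the lattice basis) rather than the ambient Euclidean norm --- is exactly the identification the paper makes in the remark that $\mu'$ equals the smallest singular value of $B$, and the spectral-radius argument then transfers verbatim.
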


\bibliographystyle{amsplain}

\end{document}